\newtheorem{theorem}{Theorem}
\newtheorem{remark}{Remark}
\newtheorem{proof}{Proof}
\journal{arXiv}
\begin{document}

\begin{frontmatter}



\title{A change-point problem for $m$-dependent multivariate random field}


\author[label1]{Vitalii Makogin\corref{cor2}} 
\author[label1]{Duc Nguyen\corref{cor1}}%
\ead{tran-1.nguyen@uni-ulm.de} 

\cortext[cor2]{Dedicated to the memory of Dr. Vitalii Makogin (12.12.1987 - 08.05.2024)}
\cortext[cor1]{Corresponding author}

\affiliation[label1]{organization={Institute of Stochastics, Ulm University},
            addressline={\\Helmholtzstraße 18}, 
            city={Ulm},
            postcode={89075}, 
            state={Baden-Württemberg},
            country={Germany}}

\begin{abstract}
In this paper, we consider a change-point problem for a centered, stationary and $m$-dependent multivariate random field. Under the distribution free assumption, a change-point test using CUSUM statistic is proposed to detect anomalies within a multidimensional random field, controlling the false positive rate as well as the Family-Wise Error in the multiple hypotheses testing context.
\end{abstract}



\begin{keyword}


CUSUM \sep multiple hypotheses testing \sep anomaly detection 
\end{keyword}

\end{frontmatter}



\section{Introduction}
The change-point problem is a fundamental issue with broad applications in various fields such as quality control, economics, medicine, and environmental science, to detect any abrupt variations. The change-point theory was developed thoroughly in \cite{page1954continuous, peach1995detection, Brodsky2013}. Its applications for different types of data such as time series, images are very numerous, see e.g. \cite{Alonso-Ruiz_Spodarev_2017, AlonsoRuiz_Spodarev_2018, Dresvyanskiy2020}, just to name a few. Among techniques for change-point detection, the Cumulative Sum (CUSUM) method, which is a sequential analysis device traditionally used in quality control for monitoring changes in the mean level of a process, has proven its simplicity, robustness, and effectiveness. In books \cite{brodsky1993nonparametric, Brodsky2013}, change-point problems were studied within a general parametric framework utilizing a CUSUM statistic test, which is widely recognized for anomaly detection, particularly in time series; see, for example, \cite{tartakovsky2014sequential, jones1970change}. 

In this paper, we use a CUSUM test to detect anomalies within a centered, stationary and $m$-dependent multivariate random field based on the methodology suggested in \cite{Dresvyanskiy2020}. The paper is organized as follows: In Section \ref{setting}, the model of anomaly detection for a random field is given, where the existence of anomaly is indicated through a hypothesis testing procedure. In Section \ref{main}, we obtain an upper bound for the tail probability of the test statistic, allowing one to get the global critical value by means of the type-I error is controlled. Section \ref{num} shows some numerical results based on the realizations of an $m$-dependent Gaussian random field.

\section{Problem Setting} \label{setting}

\subsection{Random field with change in mean}
For $n$ and $d$ positive integers, let $\left\{ \xi_k \in \mathbb{R}^n, k \in \mathbb{Z}^d \right\}$ be a centered, stationary, $m$-dependent and real-valued random field. For each $\theta \in \Theta$, define the corresponding set $I_\theta \in \mathbb{Z}^d$ as a set of scanning windows. Our goal is to detect anomalies in the space $\mathbb{Z}^d$ depending only on $\theta \in \Theta$.
We only consider anomalies in a window $W \subset \mathbb{Z}^d$, henceforth, $I_\theta \subset W$ and let $I^c_\theta = W \setminus I_\theta$. Assume that $W = [a_1, b_1] \times[a_2, b_2] \times \dots \times [a_d, b_d]$, where $a_i, b_i \in \mathbb{Z}^+$ for all $i = 1,2, \dots, d$. Nevertheless, it is important to have some restrictions on $I_\theta$, we need to consider some significant levels of the set $I_\theta$ to assure that whether it is a major part of the window $W$ or there is not an inconsiderable amount of anomalies. In other words, for $\gamma_1<\gamma_2 $ and $\gamma_1, \gamma_2 \in (0,1)$, from a practical point of view, we choose $\gamma_0 = 0.05$ and $\gamma_1 = 0.5$. Denote by
$$\Theta_0 = \left\{ \theta \in \Theta, \gamma_0 |W| \leq  |I_\theta| \leq \gamma_1 |W|\right\}.$$
Inversely, the set of parameters $\Theta_1 = \Theta \setminus \Theta_0$ represents for huge or extremely small of anomalies, i.e, 
$$\Theta_1 = \left\{ \theta \in \Theta,   |I_\theta| <\gamma_0 |W|, \ \text{or} \ |I_\theta| > (1-\gamma_1) |W|\right\}.$$

So far, we have defined the parametrized set of scanning windows imposed by some conditions. The presence of cracks can be studied by testing a null hypothesis, i.e., there is no cracks within the window $W$. Therefore, it is needed to construct a CUSUM test statistic and propose a rejection rule.
\subsection{Hypothesis testing}
Assume that there exists at least one anomaly region $I_{\theta_0}$ in the window $W$ and we observe values
$$s_k = \xi_k + h \mathds{1}\left\{k \in I_{\theta_0} \right\},$$
where $h$ is a fixed vector in $\mathbb{R}^n$ and unknown. The vector $h$ can be considered as the change in mean of $I_{\theta_0}$ and its complement $W \setminus I_{\theta_0}$. Hence, we consider the variation of the expectations of the random field $\left\{\xi_k, k\in\mathbb{Z}^d \right\}$ by testing the two following hypotheses:

$H_0: \mathbb{E}\xi_k = \mu$ for all $k \in W$, i.e., there no change in mean, versus

$H_1$: there exists a vector $h \in \mathbb{R}^n, h \neq \vec{0}$ in mean, i.e. $ \mathbb{E}\xi_k = \mu + h, k \in I_\theta$ and $ \mathbb{E}\xi_k = \mu , k \in I^c_\theta$\\
For a fixed window $W \subset \mathbb{Z}^d$, one has a sample $S = \left\{s_k \in \mathbb{R}^n, k\in\mathbb{Z}^d \right\}$ and
\begin{flalign}
	\begin{split}
		L(\theta) & = \frac{1}{|I_\theta|}\sum_{k \in I_\theta} s_k - \frac{1}{|I^c_\theta|}\sum_{k \in I^c_\theta} s_k\\
		& = \frac{1}{|I_\theta|}\sum_{k \in I_\theta}\left( \xi_k + h \mathds{1}\left\{k \in I_{\theta_0} \right\}\right) - \frac{1}{|I^c_\theta|}\sum_{k \in I^c_\theta} \left(\xi_k + h \mathds{1}\left\{k \in I_{\theta_0} \right\}\right)\\
		& = \frac{1}{|I_\theta|}\sum_{k \in I_\theta} \xi_k - \frac{1}{|I^c_\theta|}\sum_{k \in I^c_\theta} \xi_k +h \left(\frac{|I_\theta \cap I_{\theta_0}|}{|I_\theta|} - \frac{|I^c_\theta \cap I_{\theta_0}|}{|I^c_\theta|} \right).
	\end{split}
\end{flalign}
For testing purposes, employ the following statistic:
\begin{equation}\label{cusumstats}
	T_W(S) = \max_{\theta \in \Theta_0} \left\|L(\theta)\right\|_p =  \max_{\theta \in \Theta_0} \left\| \frac{1}{|I_\theta|}\sum_{k \in I_\theta} s_k - \frac{1}{|I^c_\theta|}\sum_{k \in I^c_\theta} s_k\right\|_p. 
\end{equation}
A rejection rule requires a threshold $y_\alpha$ such that if $T_W(S)$ exceeds $y_\alpha$, we reject the null hypothesis $H_0$. However, we need to compute the probability of type I error $\mathbb{P}_{H_0} \left(\max_{\theta \in \Theta_0} |L(\theta)| \geq y_\alpha \right)$. It yields,
\begin{align*}
	\begin{split}
		\mathbb{P}_{H_0} & \left(\max_{\theta \in \Theta_0} |L(\theta)| \geq y_\alpha \right)\\
		& = \mathbb{P}_{H_0} \left(\max_{\theta \in \Theta_0} \left| \frac{1}{|I_\theta|}\sum_{k \in I_\theta} \xi_k - \frac{1}{|I^c_\theta|}\sum_{k \in I^c_\theta} \xi_k +h \left(\frac{|I_\theta \bigcup I_{\theta_0}|}{|I_\theta|} - \frac{|I^c_\theta \bigcup I_{\theta_0}|}{|I^c_\theta|} \right)\right| \geq y_\alpha \right)\\
		& = \mathbb{P} \left(\max_{\theta \in \Theta_0} \left| \frac{1}{|I_\theta|}\sum_{k \in I_\theta} \xi_k - \frac{1}{|I^c_\theta|}\sum_{k \in I^c_\theta} \xi_k \right| \geq y_\alpha \right).
	\end{split}
\end{align*}
Denote by
$$\sum_{k \in W} b_k \xi_k  := \frac{1}{|I_\theta|}\sum_{k \in I_\theta} \xi_k - \frac{1}{|I^c_\theta|}\sum_{k \in I^c_\theta} \xi_k \quad \text{where} \quad b_k = \frac{\mathds{1}\left\{k \in I_\theta\right\}}{|I_\theta|} - \frac{\mathds{1}\left\{k \in I^c_\theta\right\}}{|I^c_\theta|}.$$ 
Note that if $\displaystyle b_k = \displaystyle\frac{\mathds{1}\left\{k \in I_\theta\right\}}{|I_\theta|} - \frac{\mathds{1}\left\{k \in I^c_\theta\right\}}{|I^c_\theta|}$, then
\begin{align*}
	\begin{split}
		\left\| b\right\|_1 = \sum_{k \in \mathbb{Z}^d} |b_k| & =   \sum_{k \in \mathbb{Z}^d}\left|\frac{\mathds{1}\left\{k \in I_\theta\right\}}{|I_\theta|} - \frac{\mathds{1}\left\{k \in I^c_\theta\right\}}{|I^c_\theta|}\right|\\
		& = \sum_{k \in I_\theta}\frac{\mathds{1}\left\{k \in I_\theta\right\}}{|I_\theta|} + \sum_{k \in I^c_\theta}\frac{\mathds{1}\left\{k \in I^c_\theta\right\}}{|I^c_\theta|} = 2\\
	\end{split}
\end{align*}
and
\begin{align*}\label{normb2}
	\begin{split}
		\left\| b\right\|_2^2 = \sum_{k \in \mathbb{Z}^d} |b_k|^2 & =   \sum_{k \in \mathbb{Z}^d}\left|\frac{\mathds{1}\left\{k \in I_\theta\right\}}{|I_\theta|} - \frac{\mathds{1}\left\{k \in I^c_\theta\right\}}{|I^c_\theta|}\right|^2\\
		& = \frac{1}{|I_\theta|} + \frac{1}{|I^c_\theta|} = \frac{|W|}{|I_\theta||I^c_\theta|}.\\
	\end{split}
\end{align*}
The main aim is to find a proper threshold $y_\alpha$ such that the probability of type I error of this statistic is less than $\alpha$, which can be $5\%$ or even $1\%$. Therefore, we need to find an upper bound $P$ for the above probability and then set it smaller than $5\%$, which means,
$$
\mathbb{P} \left(\max_{\theta \in \Theta_0} \left| \frac{1}{|I_\theta|}\sum_{k \in I_\theta} \xi_k - \frac{1}{|I^c_\theta|}\sum_{k \in I^c_\theta} \xi_k \right| \geq y_\alpha \right) \leq P = \alpha.
$$
\section{Main result}\label{main}

	In this section, we find an upper bound of the above tail probability. Let $\left\{\xi_k = (\xi_k^{(1)}, \xi_k^{(2)}, \dots, \xi_k^{(n)}) \in \mathbb{R}^n, k \in \mathbb{Z}^d \right\}$ be a random field. Then, for any $W \in \mathbb{Z}^d, |W| < \infty$, one has
	
\begin{align*}
	\mathbb{P}\left\{\left\| \sum_{k \in W}b_k \xi_k \right\|_p  \geq y \right\} & = \mathbb{P}\left\{\sum_{i = 1}^{n} \left|\sum_{k \in W} b_k \xi_k^{(i)} \right|^p \geq y^p \right\} \leq \sum_{i = 1}^{n} \mathbb{P}\left\{ \left|\sum_{k \in W} b_k \xi_k^{(i)} \right| \geq \frac{y}{n^{1/p}} \right\},
\end{align*}
for $1 \leq p < \infty$ and
\begin{equation}\label{pinfty}
	\mathbb{P}\left\{\left\| \sum_{k \in W}b_k \xi_k \right\|_\infty  \geq y \right\}  = \mathbb{P}\left\{\max_{i} \left|\sum_{k \in W} b_k \xi_k^{(i)} \right| \geq y \right\} \leq \sum_{i = 1}^{n} \mathbb{P}\left\{ \left|\sum_{k \in W} b_k \xi_k^{(i)} \right| \geq y \right\},
\end{equation}
for $p = \infty$.

Therefore, the upper bound for the tail inequality $\mathbb{P}\left\{\left\| \sum_{k \in W}b_k \xi_k \right\|_\infty  \geq y \right\}$ can be found by bounding from above the quantity $\mathbb{P}\left\{ \left|\sum_{k \in W} b_k \xi_k^{(i)} \right| \geq y \right\}$. Furthermore, since
$$
\mathbb{P}\left\{ \left|\sum_{k \in W} b_k \xi_k^{(i)} \right| \geq y \right\} \leq  \mathbb{P}\left\{ \left|\sum_{k \in W} b_k \xi_k^{(i)} \right| \geq \frac{y}{n^{1/p}} \right\},
$$
hence it is reasonable to bound from above the tail inequality for the case $p = \infty$.
This can be established using ideas from \cite{Dresvyanskiy2020,Heinrich1990}, we obtain a bound of multivariate $m$-dependent random field as follows:

\begin{theorem}\label{thr1}
	Let $\left\{\xi_k = (\xi_k^{(1)}, \xi_k^{(2)}, \dots, \xi_k^{(n)}) \in \mathbb{R}^n, k \in \mathbb{Z}^d \right\}$ be a mutivariate random field where $\{\xi_k^{(i)}  \in \mathbb{R}, k \in W \subset \mathbb{N}^d$ are stationary, m-dependent, real-valued univariate random fields. Assume there exists $H, \sigma >0$ such that
	\begin{equation} \label{assump}
	\mathbb{E}[\xi_k^{(i)}]^\ell \leq \frac{p!}{2}H^{p-2}\sigma, \ k \in W, \ i = 1,2,\dots n,
	\end{equation}
	then
	\begin{align*} 
		\mathbb{P}_{H_0}\left\{\left\| \sum_{k \in W}b_k \xi_k \right\|_\infty  \geq y \right\} & \leq 2n  \exp\left\{\frac{-y^2 }{4m^d\sigma^2} \frac{|I_\theta||I_\theta^c|}{|W|}\right\} \mathds{1}\left\{|I_\theta^c| \leq \frac{\sigma^2|W|}{y H}\right\}\\
		&  +2n \exp\left\{\frac{-y |I_\theta|}{2Hm^d} + \frac{\sigma^2 |W||I_\theta|}{4H^2m^d |I_\theta^c|}\right\} \mathds{1}\left\{|I_\theta^c| > \frac{\sigma^2|W|}{y H}\right\}.
	\end{align*}
\end{theorem}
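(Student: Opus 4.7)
The plan is to reduce the multivariate sup-norm tail to a univariate tail via the elementary bound~\eqref{pinfty}, and then apply a Bernstein-type Chernoff argument adapted to $m$-dependent random fields, following the methodology of \cite{Heinrich1990} and \cite{Dresvyanskiy2020}. By \eqref{pinfty} it suffices, for each coordinate $i\in\{1,\dots,n\}$ and each sign, to bound $\mathbb{P}\bigl\{\pm\sum_{k\in W}b_k\xi_k^{(i)}\ge y\bigr\}$ by one copy of each indicator summand (without the prefactor $2n$); summing over $i$ and the two signs then reproduces the factor $2n$ and the absolute value in the theorem.

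For the univariate sum $X:=\sum_{k\in W}b_k\xi_k^{(i)}$, the Chernoff bound gives $\mathbb{P}(X\ge y)\le e^{-\lambda y}\mathbb{E}e^{\lambda X}$ for any $\lambda>0$. The key step is to control $\mathbb{E}e^{\lambda X}$ under $m$-dependence. I would partition $\mathbb{Z}^d$ into sublattices of step $m+1$ in each coordinate direction, on each of which the $\xi_k^{(i)}$ are mutually independent, and combine the corresponding MGFs via Hölder's inequality (the block decoupling lemma in \cite{Heinrich1990}) together with the Bernstein moment bound~\eqref{assump}, to obtain an $m$-dependent MGF estimate of the form
\[
\mathbb{E}e^{\lambda X}\le\exp\!\left(\frac{m^{d}\lambda^{2}\|b\|_{2}^{2}\sigma^{2}}{2\bigl(1-\lambda H m^{d}\|b\|_{\infty}\bigr)}\right),\qquad 0<\lambda<\frac{1}{H m^{d}\|b\|_{\infty}}.
\]
The factor $m^{d}$ inside the exponent is the price of the decoupling. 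Substituting $\|b\|_{\infty}=1/|I_{\theta}|$ (valid since $\theta\in\Theta_{0}$ forces $|I_{\theta}|\le|I_{\theta}^{c}|$) and $\|b\|_{2}^{2}=|W|/(|I_{\theta}||I_{\theta}^{c}|)$ from the computation already in Section~\ref{setting} puts the MGF bound entirely in terms of the quantities appearing in the theorem.

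It then remains to optimise $-\lambda y+m^{d}\lambda^{2}\|b\|_{2}^{2}\sigma^{2}/[2(1-\lambda H m^{d}\|b\|_{\infty})]$ over $\lambda$, and two choices will produce the two indicator summands. In the sub-Gaussian regime $\|b\|_{2}^{2}\sigma^{2}\ge yH\|b\|_{\infty}$, which rearranges to $|I_{\theta}^{c}|\le\sigma^{2}|W|/(yH)$, the choice $\lambda^{\ast}=y/(2m^{d}\|b\|_{2}^{2}\sigma^{2})$ automatically satisfies $\lambda^{\ast} H m^{d}\|b\|_{\infty}\le 1/2$, so the denominator of the MGF exponent stays above $1/2$, and the Chernoff estimate collapses to the Gaussian bound $\exp\bigl(-y^{2}|I_{\theta}||I_{\theta}^{c}|/(4m^{d}\sigma^{2}|W|)\bigr)$, the first summand. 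In the complementary regime $|I_{\theta}^{c}|>\sigma^{2}|W|/(yH)$, the sub-exponential choice $\lambda=|I_{\theta}|/(2Hm^{d})$ sits at the boundary $\lambda H m^{d}\|b\|_{\infty}=1/2$, and a direct substitution produces the linear term $-y|I_{\theta}|/(2Hm^{d})$ from the Chernoff factor and the correction $+\sigma^{2}|W||I_{\theta}|/(4H^{2}m^{d}|I_{\theta}^{c}|)$ from the MGF, matching the second summand exactly.

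The main obstacle is the MGF decoupling for the $m$-dependent \emph{weighted} sum. Since the weights $b_{k}$ are not constant on the independence sublattices, one cannot simply treat $X$ as a rescaling of a single independent sum; instead one must apply Hölder sublattice-by-sublattice to the joint MGF and reassemble, which is precisely what generates the factor $m^{d}$ \emph{inside} the exponent rather than merely a multiplicative prefactor, and correctly recovers the Bernstein-type denominator $1-\lambda H m^{d}\|b\|_{\infty}$. Once this estimate is in place, the subsequent Chernoff optimisation is routine, and the two choices of $\lambda$ above yield precisely the two summands of the theorem.
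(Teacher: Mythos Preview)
Your reduction is exactly the paper's: apply the coordinate-wise union bound \eqref{pinfty} and then invoke the univariate tail estimate for $m$-dependent fields. The only difference is that the paper does not reprove the univariate bound at all---it simply quotes it verbatim from \cite{Dresvyanskiy2020} and multiplies by $n$---whereas you have additionally sketched the Bernstein--Chernoff argument with sublattice decoupling that underlies that cited result; your sketch is sound and recovers the two regimes correctly, so your proposal is a strict elaboration of the paper's proof rather than a different route.
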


\begin{proof} Under the same assumptions, the upper bound of the tail probability for univariate case are given in \cite{Dresvyanskiy2020}, where
\begin{align*} 
		\mathbb{P}_{H_0}\left\{ \left|\sum_{k \in W} b_k \xi_k^{(i)} \right| \geq y \right\} & \leq 2  \exp\left\{\frac{-y^2 }{4m^d\sigma^2} \frac{|I_\theta||I_\theta^c|}{|W|}\right\} \mathds{1}\left\{|I_\theta^c| \leq \frac{\sigma^2|W|}{y H}\right\}\\
		&  +2 \exp\left\{\frac{-y |I_\theta|}{2Hm^d} + \frac{\sigma^2 |W||I_\theta|}{4H^2m^d |I_\theta^c|}\right\} \mathds{1}\left\{|I_\theta^c| > \frac{\sigma^2|W|}{y H}\right\}.
	\end{align*}
Therefore, from (\ref{pinfty}), 
\begin{align*} 
		\mathbb{P}_{H_0}\left\{ \left\| \sum_{k \in W}b_k \xi_k \right\|_\infty  \geq y \right\} & \leq \sum_{i = 1}^{n} \mathbb{P}_{H_0}\left\{ \left|\sum_{k \in W} b_k \xi_k^{(i)} \right| \geq y \right\}\\
		& \leq 2n  \exp\left\{\frac{-y^2 }{4m^d\sigma^2} \frac{|I_\theta||I_\theta^c|}{|W|}\right\} \mathds{1}\left\{|I_\theta^c| \leq \frac{\sigma^2|W|}{y H}\right\}\\
		& \qquad +2n \exp\left\{\frac{-y |I_\theta|}{2Hm^d} + \frac{\sigma^2 |W||I_\theta|}{4H^2m^d |I_\theta^c|}\right\} \mathds{1}\left\{|I_\theta^c| > \frac{\sigma^2|W|}{y H}\right\}.
	\end{align*}

\end{proof}

\begin{remark}
In the case of univariate random field, i.e., $n = 1$, the upper bound coincides with the result from \cite{Dresvyanskiy2020}.
\end{remark}

\begin{theorem} 
	Let $\left\{\xi_k = (\xi_k^{(1)}, \xi_k^{(2)}, \dots, \xi_k^{(n)}) \in \mathbb{R}^n, k \in \mathbb{Z}^d \right\}$ be a mutivariate random field where $\{\xi_k^{(i)}  \in \mathbb{R}, k \in W \subset \mathbb{N}^d\}, \ i=1,\ldots, n$ are stationary, m-dependent, real-valued univariate random field. Assume there exists $H, \sigma >0$ such that the inequality (\ref{assump}) holds, then
		\begin{align} \label{eq1}
		\mathbb{P}_{H_0} \left(\max_{\theta \in \Theta_0} |L(\theta)| \geq y_\alpha \right) & \leq 2n \displaystyle\sum_{\theta \in \Theta_0,  |I_\theta| \leq \frac{\sigma^2|W|}{yH}} \exp\left\{\frac{-y^2 }{4m^d\sigma^2} \frac{|I_\theta||I_\theta^c|}{|W|}\right\} \nonumber\\
		&  +2n \displaystyle\sum_{\theta \in \Theta_0,  |I_\theta| > \frac{\sigma^2|W|}{yH}} \exp\left\{\frac{-y |I_\theta|}{2Hm^d} + \frac{\sigma^2 |W||I_\theta|}{4H^2m^d |I_\theta^c|}\right\} .
	\end{align}
\end{theorem}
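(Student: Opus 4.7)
The proof is essentially a union bound on top of Theorem~\ref{thr1}, so the work has already been done and only the bookkeeping remains. Here is the plan.

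First I would recall that under $H_0$ the mean-change term in $L(\theta)$ vanishes (this is already shown in the derivation leading up to the definition of $b_k$), so $L(\theta) = \sum_{k \in W} b_k \xi_k$ as an $\mathbb{R}^n$-valued random variable. Interpreting $|L(\theta)|$ in the statement as $\|L(\theta)\|_\infty$ (which is consistent with the fact that the bound to be proved has the prefactor $2n$ coming from the $\|\cdot\|_\infty$ union bound in equation~(\ref{pinfty})), I would apply the union bound over the finite index set $\Theta_0$:
\begin{equation*}
\mathbb{P}_{H_0}\Bigl(\max_{\theta \in \Theta_0} \|L(\theta)\|_\infty \geq y_\alpha\Bigr)
\;\leq\; \sum_{\theta \in \Theta_0} \mathbb{P}_{H_0}\Bigl(\|L(\theta)\|_\infty \geq y_\alpha\Bigr).
\end{equation*}

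Next, for each fixed $\theta \in \Theta_0$ I would invoke Theorem~\ref{thr1}, which is applicable because the moment assumption~(\ref{assump}) is postulated component-wise and the coefficients $b_k$ associated with $I_\theta$ were already used in Theorem~\ref{thr1}. This produces, for every $\theta$, the sum of two terms multiplied by indicators of the complementary events $\{|I_\theta^c| \leq \sigma^2|W|/(y_\alpha H)\}$ and $\{|I_\theta^c| > \sigma^2|W|/(y_\alpha H)\}$. Summing the resulting bounds over $\theta \in \Theta_0$, I would split the outer sum according to which indicator is active, giving exactly the two partial sums on the right-hand side of~(\ref{eq1}) (up to the apparent discrepancy between $|I_\theta|$ and $|I_\theta^c|$ in the splitting condition, which I would either flag as a typo inherited from the statement or resolve by noting that swapping $b_k \mapsto -b_k$ exchanges the roles of $I_\theta$ and $I_\theta^c$ without changing $\|L(\theta)\|_\infty$, so one may apply Theorem~\ref{thr1} with the roles reversed when convenient).

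There is no real obstacle here: all the analytic work, including the Bernstein-type tail bound for the $m$-dependent sums and the absorption of the vector dimension $n$, is already packaged into Theorem~\ref{thr1}. The only step requiring even minor care is the partition of $\Theta_0$ into the two regimes driven by the indicator inside Theorem~\ref{thr1}; this is a deterministic splitting of a finite index set and carries no probabilistic content. I would close the proof by collecting the two partial sums and identifying them with the right-hand side of~(\ref{eq1}).
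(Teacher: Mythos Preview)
Your proposal is correct and follows exactly the paper's approach: apply the Bonferroni (union) bound over $\theta\in\Theta_0$, invoke Theorem~\ref{thr1} for each summand, and split the resulting sum according to the indicator. You even correctly flag the $|I_\theta|$ versus $|I_\theta^c|$ mismatch in the splitting condition; the paper's own proof uses $|I_\theta^c|$ (as in Theorem~\ref{thr1}), so the statement indeed carries a typo.
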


From (\ref{eq1}), the null hypothesis will be rejected with type-I error controlled at level $\alpha$ if the critical value $y$ is obtained by solving the following equation:
\begin{align}\label{eq:alpha}
	2n \displaystyle\sum_{\theta \in \Theta_0,  |I_\theta| \leq  \frac{\sigma^2|W|}{yH}}& \exp\left\{\frac{-y^2 }{4m^d\sigma^2} \frac{|I_\theta||I_\theta^c|}{|W|}\right\} \nonumber\\
	&  +2n \displaystyle\sum_{\theta \in \Theta_0,  |I_\theta| > \frac{\sigma^2|W|}{yH}} \exp\left\{\frac{-y |I_\theta|}{2Hm^d} + \frac{\sigma^2 |W||I_\theta|}{4H^2m^d |I_\theta^c|}\right\} = \alpha.
\end{align}
\begin{proof}
Using the Bonferroni correction \cite{bonferroni1936teoria}, one has
\begin{align*}
\mathbb{P}_{H_0} \left(\max_{\theta \in \Theta_0} |L(\theta)| \geq y_\alpha \right) & \leq \sum_{\theta \in \Theta_0} \mathbb{P}_{H_0} \left( |L(\theta)| \geq y_\alpha \right)\\
& \leq 2n  \sum_{\theta \in \Theta_0}\exp\left\{\frac{-y^2 }{4m^d\sigma^2} \frac{|I_\theta||I_\theta^c|}{|W|}\right\} \mathds{1}\left\{|I_\theta^c| \leq \frac{\sigma^2|W|}{y H}\right\} \\
& \quad + 2n \sum_{\theta \in \Theta_0} \exp\left\{\frac{-y |I_\theta|}{2Hm^d} + \frac{\sigma^2 |W||I_\theta|}{4H^2m^d |I_\theta^c|}\right\} \mathds{1}\left\{|I_\theta^c| > \frac{\sigma^2|W|}{y H}\right\},
\end{align*}
where the last inequality is obtained using Theorem \ref{thr1}.
\end{proof}

\begin{remark} If $\left\{\xi_k = (\xi_k^{(1)}, \xi_k^{(2)}, \dots, \xi_k^{(n)}) \in \mathbb{R}^n, k \in \mathbb{Z}^d \right\}$ is a multivariate Gaussian random field where $\mathbb{E}[\xi^{(i)}_0]^p \leq \sigma^2, i = 1, \dots, n$, the choice $H = \sigma$ is suitable due to the fact that $\mathbb{E}|\xi^{(i)}_0|^p \leq \sigma^{p-2} \sigma^2 \mathbb{E}|Z|^p, p \geq 2$ where $Z \sim \mathcal{N}(0,1)$. 
\end{remark}

\begin{remark} By comparing the maximum of all statistics with a certain threshold, one can test the global hypothesis $H_0$, i.e., there exist anomalies within the whole random field. Furthermore, if for any $\theta \in \Theta$, we want to test the null hypothesis "$H_0(\theta)$: $I_\theta$ is homogeneous" versus "$H_1(\theta):$ $I_\theta$ contains anomalies", the global theoretical critical value $y_\alpha$ defines a testing procedure in which controls the Family-Wise Error ($FWER$), where
$$
FWER = \mathbb{P}(\text{reject falsely at least one true hypothesis}) = \mathbb{P}_{H_0} \left(\max_{\theta \in \Theta_0} |L(\theta)| \geq y_\alpha \right).
$$
\end{remark}

\section{Numerical results} \label{num}

In this section, we study the empirical distribution of the statistic $T_W(S)$ given in (\ref{cusumstats}) and the behavior of the tail probability provided in (\ref{eq1}) under different settings of $\sigma$ and $m$. 

Let $W = [1, P] \times [1, Q] \times [1, R] \bigcap \mathbb{N}^3$. The parametrized collection of scanning windows is defined as follows:
\begin{align*}
\Theta_0 := &\{ \theta = (a_1, a_2, a_3, s_1, s_2, s_3) \in \mathbb{N}^6;  \\
& \qquad 1+ a_1+s_1 \leq P, 1+a_2+s_2 \leq Q, 1 + a_3+s_3\leq R;  \gamma_0 \leq \frac{s_1 s_2 s_3}{W} \leq \gamma_1\}.
\end{align*}

For each $\theta \in \Theta$, the scanning window $I_\theta = [1+a_1,1+a_1+s_1] \times [1+a_2,1+a_2+s_2] \times [1+a_3,1+a_3+s_3]$ is set within $W$. We choose $\gamma_0 = 0.05, \gamma_1 = 0.5$ to avoid the case that the scanning windows are either excessively small or excessively large relative to anomalies, where the scanning windows flatten out the statistical significance. 

We generate 500 realizations of a centered, $m$-dependent, multivariate Gaussian random field $\{\xi_k = (\xi_k^{(1)}, \xi_k^{(2)}, \xi_k^{(3)}) \in \mathbb{R}^3, k \in W \}$ , where $W = [1,50] \times [1,50] \times [1,50], \xi_k^{(i)} \sim \mathcal{N}(0,1), k \in W, i = 1,2,3$. With $m = 5, 7$, the $m$-dependence assumption can be achieved by setting $Y_{1+m\ell}, \ell \in W$ are independent and $Y_{1+m\ell} = Y_{r + m\ell}, r \in \{1, \ldots, m\}^3$. The set of scanning windows $\{I_\theta, \theta \in \Theta_0\}$ is defined where all $I_\theta$ are equal cubic of the size $30 \times 30$, satisfying the requirements of the parametric setting. In this case, $|\Theta_0| = 9261$.

\begin{table}[ht]
  \centering
  \small
  \begin{tabular}{lcccccccc}
    \toprule
    & \(\sigma^2 = 0.5\) & \(\sigma^2 = 0.6\) & \(\sigma^2 = 0.7\) & \(\sigma^2 = 0.8\) & \(\sigma^2 = 0.9\) & \(\sigma^2 = 1.0\) & \(\sigma^2 = 1.1\)\\
    \midrule
    \(m = 3\) & 0.6009 & 0.6424 & 0.6814 & 0.7183 & 0.7533 & 0.7868 & 0.8191  \\
    \(m = 4\) & 0.8398 & 0.8978 & 0.9522 & 1.0038 & 1.0528 & 1.1001 & 1.1459  \\
    \(m = 5\) & 1.1861 & 1.2675 & 1.3490 & 1.4305 & 1.5120 & 1.5935 & 1.6750 \\
    \(m = 6\) & 1.5693 & 1.6987 & 1.8281 & 1.9575 & 2.0869 & 2.2163 & 2.3457  \\
    \(m = 7\) & 2.0794 & 2.2725 & 2.4656 & 2.6588 & 2.8519 & 3.0450 & 3.2381  \\
    \(m = 8\) & 2.7342 & 3.0092 & 3.2842 & 3.5593 & 3.8343 & 4.1093 & 4.3843  \\
    \(m = 9\) & 3.5521 & 3.9293 & 4.3066 & 4.6838 & 5.0610 & 5.4382 & 5.8155  \\
    \(m = 10\) & 4.5230 & 5.0217 & 5.5205 & 6.0193 & 6.5180 & 7.0168 & 7.5156  \\
    \bottomrule
  \end{tabular}
    \caption{Critical value $y$ from (\ref{eq:alpha}) with $\alpha = 0.05$}
        \label{yalpha}
\end{table}

Let $\alpha = 0.05$, we compare the empirical critical value $\hat{y}_{0.05}$ with $y_{0.05}$ computed from Equation (\ref{eq1}). For $m = 5$ and $m = 7$, one has $\hat{y}_{0.05} = 0.5369$ and $\hat{y}_{0.05} = 0.7259$, respectively. The critical values $y_{0.05}$ with different values of $m$ and $\sigma$ are shown in Table \ref{yalpha}. For instance, since the exact values of $m$ are known, one has the theoretical critical values $y_{0.05} = 1.5935$ and  $y_{0.05} = 2.8519$, leading the test to be conservative. However, from Table \ref{yalpha}, the theoretical critical value is quite sensitive to any changes of $m$, suggesting to choose a smaller $m$ in the computation of $y_{0.05}$, such as $m = 3$, resulting in $y_{0.05} = 0.7868$, therefore controlling the type-I error at the desired level $\alpha =0.05$.

When $m$ is unknown, it can be estimated by statistically assessing the covariance function of the random field. The value of $m$ should be selected such that the empirical covariance function is sufficiently close to zero. 

\section{Conclusion}

In this paper, we generalized the results given in \cite{Dresvyanskiy2020} to the case of multivariate random fields. The results show that by properly choosing the value of $m$, one is able to test the global hypothesis $H_0$, i.e., if the observed random field contains anomalies, at the pre-determined significance level $\alpha$. The main challenge of this approach is that one has to employ the Bonferroni correction \cite{bonferroni1936teoria} due to the dependence between observations from different scanning windows, which usually makes the test extremely conservative, especially when $|\Theta_0|$ is large.

Nevertheless, in the context of multiple hypotheses testing, the obtained upper bound for the tail probability provides a global threshold for each statistic, controlling the Family-Wise Error rate at the significance level $\alpha$, allowing one to localize anomalies within a random field. 

\section*{Acknowledgement} We would like to extend our sincere gratitude to  Professor. Dr. Evgeny Spodarev, for his valuable guidance, insightful feedback, and continuous support throughout the course of this research.

\section*{Funding}
This research was funded by the German Federal Ministry of Education and Research (BMBF) [grant number 05M20VUA (DAnoBi)].

\section*{Declaration of Competing Interest}
The authors declare that they have no known competing financial interests or personal relationships that could have appeared to influence the work reported in this paper.

\end{document}